\def\data{24 January 2020}
\newcommand{\bq}{\begin{quote}}
\newcommand{\eq}{\end{quote}}
\newcommand{\bi}{\begin{itemize}}
\newcommand{\ei}{\end{itemize}}
\newcommand{\bd}{\begin{description}}
\newcommand{\ed}{\end{description}}
\newcommand{\ben}{\begin{enumerate}}
\newcommand{\een}{\end{enumerate}}
\newcommand{\bbm}{\begin{bmatrix}}
\newcommand{\ebm}{\end{bmatrix}}
\newcommand{\bea}{\begin{eqnarray*}}
\newcommand{\eea}{\end{eqnarray*}}
\newtheorem{theorem}{Theorem}
\newtheorem{lemma}{Lemma}[section]
\newtheorem{proposition}[lemma]{Proposition}
\newtheorem{remark}[theorem]{Remark}
\newtheorem{corollary}[theorem]{Corollary}
\def\bF{\mathbb{F}}
\def\lsl{\mathfrak{sl}}
\def\KK{\mathsf{K}}
\def\QQ{\mathsf{Q}}
\def\RR{\mathsf{R}}
\def\SS{\mathsf{S}}
\def\XX{\mathsf{X}}
\def\YY{\mathsf{Y}}
\newcommand{\bx}[1]{\mathsf{#1}}     
\newcommand{\bxg}[1]{\boldsymbol #1} 
\def\Sym{{\rm Sym}}
\def\2G2{\ensuremath{^2{\rm G}_2}}
\def\sl{{\rm{SL}}}
\def\gl{{\rm{GL}}}
\def\psl{{\rm{PSL}}}
\def\pgl{{\rm{PGL}}}
\def\so{{\rm{SO}}}
\def\om{{\rm{\Omega }}}
\definecolor{darkgreen}{rgb}{0,0.6,0}
\newcommand{\encr}{\ensuremath{\vDash}}
  \def\MR#1{}
\begin{document}

\title[Natural representations of black box groups $\sl_2(\bF_q)$]{Natural representations of black box groups encrypting $\sl_2(\bF_q)$}

\author{Alexandre Borovik}
\address{Department of Mathematics, University of Manchester, UK}
\email{alexandre@borovik.net}
\author{\c{S}\"{u}kr\"{u} Yal\c{c}\i nkaya}
\address{Department of Mathematics, Istanbul University, Turkey}
\email{sukru.yalcinkaya@istanbul.edu.tr}

\subjclass{Primary 20P05, Secondary 03C65}
\date{\data}

\begin{abstract}
Given a global exponent $E$ for a black box group $\YY$ encrypting $\sl_2(\bF)$, where $\bF$ is an unknown finite field of unknown odd characteristic, we construct, in probabilistic time polynomial in $\log E$, the isomorphisms
\[
\YY \longleftrightarrow \sl_2(\KK),
\]
 where $\KK$ is a black box field encrypting $\bF$. Our algorithm makes no reference to any additional oracles. We also give similar algorithms for black box groups encrypting $\pgl_2(\bF)$, $\psl_2(\bF)$.

\end{abstract}

\maketitle


\section{Introduction}

The present paper extends the results of our previous paper \cite{BY2018} and uses its notation and terminology. In \cite{BY2018}, we presented an algorithm constructing the adjoint representation of a black box group $\YY$ encrypting $\psl_2(\bF)$ for a field $\bF$ of odd order, that is, we constructed a black box field $\KK$ encrypting $\bF$ in $\YY$ and represented elements of $\YY$ as $3 \times 3$ orthogonal matrices with entries from this black box field $\KK$.

In this paper, we use the same setup as in \cite{BY2018} to produce an algorithm which constructs the natural representation of a black box group $\XX$ encrypting $\sl_2(\bF)$. The need for such a construction arises from the fact that a constructive recognition algorithm for black box groups of Lie type of high rank  involves a constructive recognition of a black box group encrypting $\sl_2(\bF)$ \cite{brooksbank03.162,brooksbank08.885, brooksbank01.95,brooksbank06.256}, where the latter is known as the $\sl_2$-oracle. These papers are based on the use of the $\sl_2(\bF)$-oracle as well as the discrete logarithm oracle in $\bF$. We wish to emphasise that we make no use of any oracles.



We prove the following theorem.

\begin{theorem}\label{theo:proxy-sl2}
Let\/ $\YY$ be a black box group encrypting $\sl_2(\bF)$, where $\bF$ is an unknown finite field of unknown odd characteristic, and\/ $E$ be a global exponent for\/ $\YY$. Then there is a Las Vegas algorithm which constructs, in probabilistic time polynomial in $\log E$,

\begin{enumerate}
\item[(a)] a black box field $\KK$ encrypting $\bF$, and
\item[(b)] isomorphisms
\[
\bxg{\psi}: \YY \longrightarrow \sl_2(\KK) \mbox{ and } \bxg{\psi}^{-1}: \sl_2(\KK) \longrightarrow \YY
\]
which run in probabilistic time polynomial in $\log E$.
\end{enumerate}
\end{theorem}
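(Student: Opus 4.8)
The plan is to prove Theorem~\ref{theo:proxy-sl2} by bootstrapping from the adjoint (three-dimensional) representation produced in \cite{BY2018} to the natural (two-dimensional) one, the only essentially new ingredient being the resolution of the two-fold ambiguity imposed by the central extension $1\to Z\to\sl_2(\bF)\to\psl_2(\bF)\to1$. First I would locate the unique central involution $z\in\YY$ (the encrypted image of $-I$, e.g.\ the involution of a split torus) and put $Z=\la z\ra$; declaring two elements of $\YY$ equal when they differ by $Z$ turns $\YY$ into a black box group encrypting $\psl_2(\bF)$, to which \cite{BY2018} applies directly. This yields the black box field $\KK$ of part~(a) together with an adjoint representation $\bxg{\alpha}\colon\YY\to\so_3(\KK)$ whose kernel is exactly $Z$, all in time polynomial in $\log E$. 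In the same step I would read off, from the adjoint data and the involution-centraliser techniques of the earlier paper, a split maximal torus $T$ and the two opposite root subgroups $U^{+},U^{-}$ normalised by $T$, and fix coordinatising isomorphisms $\KK\xrightarrow{\sim}U^{\pm}$, $t\mapsto\uu^{\pm}(t)$, and $\KK^{\times}\xrightarrow{\sim}T$, $\lambda\mapsto h(\lambda)$, so that addition, multiplication and inversion in $\KK$ are available as subroutines.

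The reverse map $\bxg{\psi}^{-1}$ is the easier half. Given $g=\bbm a&b\\c&d\ebm\in\sl_2(\KK)$ with $c\neq0$, I would use its Gauss decomposition
\[
g=\bbm 1&ac^{-1}\\0&1\ebm\,w(-c^{-1})\,\bbm 1&dc^{-1}\\0&1\ebm,\qquad w(\mu)=\bbm 1&\mu\\0&1\ebm\bbm 1&0\\-\mu^{-1}&1\ebm\bbm 1&\mu\\0&1\ebm,
\]
together with a symmetric expression when $c=0$, to write $g$ as a short word in the fixed root elements $\uu^{+}(\cdot),\uu^{-}(\cdot)$, with exponents computed by $\KK$-arithmetic; applying the same word to the chosen elements of $\YY$ defines $\bxg{\psi}^{-1}(g)$. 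This is a well-defined homomorphism because the Steinberg relations for $\sl_2$ hold in $\YY$ on the nose (not merely modulo $Z$), and no sign ambiguity appears: each unipotent generator has odd order $p$ and therefore lifts uniquely through $Z$, while the torus elements $h(\lambda)=w(\lambda)w(1)^{-1}$ inherit their sign from the unipotent words defining them.

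Constructing $\bxg{\psi}$ itself is the heart of the matter, and it is where the central extension must be unwound. For $y\in\YY$ I would first compute $\bxg{\alpha}(y)\in\so_3(\KK)$ and recover a natural matrix $g$ with $\mathrm{Ad}(g)=\bxg{\alpha}(y)$ via the classical $\sl_2\to\so_3$ correspondence; since the entries of $\bxg{\alpha}(y)$ are quadratic monomials in the entries of $g$, this determines $g$ only up to the overall scalar $\pm1$ and requires the extraction of a single square root in $\KK$, feasible in randomised time polynomial in $\log E$ using the arithmetic of $\KK$. The residual sign is then pinned down by the black box equality test: exactly one of the two matrices $g$ and $-g$ (which differ by the central factor $z$) has $\bxg{\psi}^{-1}$-image equal to $y$ in $\YY$, and that matrix is returned as $\bxg{\psi}(y)$.

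All steps---field arithmetic, evaluation of the Steinberg words, the single square-root extraction, and the equality tests---cost time polynomial in $\log E$, so both $\bxg{\psi}$ and $\bxg{\psi}^{-1}$ meet the stated bound, and the Las Vegas character is inherited from the randomised subroutines of \cite{BY2018} together with the square-root extraction. I expect the main obstacle to be precisely this lift through $1\to Z\to\sl_2(\bF)\to\psl_2(\bF)\to1$: the adjoint (equivalently, projective) data pins down an element of $\YY$ only up to $\KK^{\times}$-scaling, so manufacturing an honest determinant-one matrix over the black box field forces a square-root extraction in $\KK$ and then a coherent choice of sign, and it is the unique odd-order lifting of unipotent elements that makes this choice consistent across all of $\YY$.
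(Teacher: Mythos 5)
Your setup (redefining equality to pass to $\YY/Z(\YY)$ and invoking \cite{BY2018}), your construction of $\sl_2(\KK)\to\YY$ by decomposing a matrix into unipotent factors and taking their unique odd-order lifts through the centre, and your final $\pm$ disambiguation by a black box equality test are all essentially the paper's own argument. But there is a genuine gap at the heart of your forward map $\bxg{\psi}\colon\YY\to\sl_2(\KK)$: you assume that \cite{BY2018} hands you an \emph{elementwise computable} adjoint representation $\bxg{\alpha}\colon\YY\to\so_3(\KK)$, i.e.\ that for an arbitrary string $\bx{y}\in\YY$ one can write down its $3\times 3$ matrix and then invert the classical $\sl_2\to\so_3$ correspondence. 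That is not what is available, and making it available is exactly the hard part of the theorem. What \cite{BY2018} provides is a two-way isomorphism $\XX\longleftrightarrow\so_3(\KK)$ for an \emph{auxiliary} group $\XX\encr\pgl_2(\bF)$ built out of pairs inside $\YY\times\YY$, generated by $(\bx{s},\bx{s})$ for $\bx{s}\in\SS$ and $(\bx{r},\bx{r}^{-1})$ for $\bx{r}\in\RR$, where $\SS,\RR$ are two specific tori. The direction $\psl_2(\KK)\to\Omega_3^\sharp(\KK)\to\XX^*/Z(\XX^*)\to\YY/Z(\YY)$ is computable because elements of $\XX^*$ are pairs and one can project onto the first coordinate; but the embedding $\YY/Z(\YY)\to\XX$, which sends $\bar{\bx{y}}$ to the pair whose second coordinate is the image of $\bx{y}$ under the diagonal automorphism, is computable only on $\SS$ and $\RR$, where the partner is known explicitly. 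For a general $\bx{y}$ there is no direct way to produce that second coordinate, hence no direct way to evaluate $\bxg{\alpha}(\bx{y})$; your proposal silently assumes this evaluation.

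The paper closes exactly this hole with its ``white elements'' argument (Section~\ref{sec:morp-Y-sl2}): an element of $\bar{\YY}=\YY/Z(\YY)$ is called white if a preimage in $\psl_2(\KK)$ is known; elements of $\SS$ and $\RR$ are white; then one shows successively that $N_{\bar{\YY}}(\SS)$ and $N_{\bar{\YY}}(\RR)$ are white (by solving a linear system over $\KK$ to find an inverting involution and pushing it forward), that centralisers of white involutions are white, that every involution is white, and finally---using the fact that every element of $\bar{\YY}$ is a polynomial-time computable product of involutions, \cite[Lemma 5.4]{BY2018}---that every element is white. Only after this propagation does one have a computable map $\YY\to\psl_2(\KK)$, which is then lifted to $\sl_2(\KK)$ by the sign test you describe. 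So your proposal is right about the easy half and the final disambiguation, but it misplaces the main difficulty: the bottleneck is not the central extension $1\to Z\to\sl_2\to\psl_2\to 1$ (the paper dispatches that with a short uniqueness-of-lifting lemma plus the transvection decomposition), it is evaluating the projective representation on arbitrary elements of $\YY$ in the first place, and your proposal contains no argument for that.
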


In addition, we call the pair of isomorphisms $\bxg{\psi}$ and $\bxg{\psi}^{-1}$  constructed in Theorem \ref{theo:proxy-sl2} a \emph{structural approximation of} $\YY$ and the matrix group $\sl_2(\KK)$ its \emph{structural proxy}.

We also write $\XX\encr G$ when a black box group (or a ring, field, etc.) $\XX$ encrypts a group (ring, field, etc.) $G$.

Ignoring finer points of the algebraic group theory which are less relevant for finite groups in the black box setup, most groups of Lie type (to avoid technical details, we exclude series $^2B_2$, $^2F_4$, and $^2G_2$) can be seen as functors $G: \mathcal{R} \longrightarrow \mathcal{G}$ from the category of commutative unital rings $\mathcal{R}$ with involution (that is, an automorphism of order $\leq 2$)  to the category of groups. There are other algebraic structures which can be defined in a similar functorial way, as functors $A: \mathcal{R} \longrightarrow \mathcal{A}$ for example, finite dimensional associative algebras and finite dimensional Lie algebras viewed as rings. The corresponding structural proxy problem can be stated as follows.

\bi
\item \emph{Construction of a structural proxy}.  Suppose that we are given  a black box structure $\XX \encr A(\bF)$. Construct, in probabilistic polynomial  in $l(\XX)$ time, where $l(\XX)$ is the length of the strings in $\XX$,
    \bi
     \item a black box field $\KK\encr \bF$, and
     \item probabilistic polynomial time isomorphisms $$\bxg{\psi}: A(\KK) \longrightarrow \XX$$
     and
     $$\bxg{\psi}^{-1}: \XX  \longrightarrow A(\KK).$$
     \ei
\ei


\begin{remark} \label{rem:so3} The key technical result in \cite{BY2018} amounts to construction of a structural proxy for the group $\XX \encr \so_3(\bF)\simeq \pgl_2(\bF)$,
\[
\XX \longleftrightarrow \so_3(\KK).
\]
In this paper we have another structural proxy for $\XX$,
\[
\XX \longleftrightarrow \pgl_2(\KK)
\]
with $\KK$ being the same black box field. Indeed, in Section \ref{sec:Corollary-2}, we construct an efficient isomorphism between $\so_3(\KK)$ and $\pgl_2(\KK)$. The existence of this isomorphism is well-known but its efficient computational realization needs some delicate treatment.
\end{remark}

\begin{theorem}\label{corollary}
Let $\XX$ be a black box group encrypting $\psl_2(\bF)$ or $\pgl_2(\bF)$,  where $\bF$ is an unknown finite field of unknown odd characteristic, and let $E$ be a global exponent for $\XX$. Then there is a Las Vegas algorithm which constructs, in probabilistic time polynomial in $\log E$, structural proxies \[
\XX \longleftrightarrow \psl_2(\KK)
\]
or
\[
\XX \longleftrightarrow \pgl_2(\KK),
\]
respectively.
\end{theorem}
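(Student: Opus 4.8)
The plan is to obtain both structural proxies not from Theorem \ref{theo:proxy-sl2} directly but from the orthogonal (adjoint) proxy of \cite{BY2018}, composed with an explicit, efficiently computable isomorphism between the three dimensional orthogonal groups and the two dimensional projective linear groups $\pgl_2$ and $\psl_2$. This is exactly the ingredient flagged in Remark \ref{rem:so3} and to be built in Section \ref{sec:Corollary-2}. In this way the corollary becomes an assembly step: the genuinely hard analytic work --- constructing the black box field $\KK \encr \bF$ and the three dimensional orthogonal representation in time polynomial in $\log E$ --- is inherited from the previous paper, and what remains is the algebraic translation of that representation into the two dimensional projective one.

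I would treat the $\pgl_2$ case first. Since $\pgl_2(\bF) \simeq \so_3(\bF)$ for $\bF$ of odd characteristic, a black box group $\XX \encr \pgl_2(\bF)$ simultaneously encrypts $\so_3(\bF)$, so the main result of \cite{BY2018} applies verbatim and produces a black box field $\KK \encr \bF$ together with a structural proxy $\XX \longleftrightarrow \so_3(\KK)$ running in time polynomial in $\log E$. Composing the two isomorphisms $\XX \longrightarrow \so_3(\KK)$ and $\so_3(\KK) \longrightarrow \XX$ with the pair $\so_3(\KK) \longleftrightarrow \pgl_2(\KK)$ of Section \ref{sec:Corollary-2} yields $\XX \longleftrightarrow \pgl_2(\KK)$; as each factor runs in probabilistic polynomial time in $\log E$, so does the composite.

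The $\psl_2$ case I would reduce to the $\pgl_2$ one through derived subgroups. For odd $\bF$ the adjoint representation identifies $\psl_2(\bF)$ with $\om_3(\bF) = \pom_3(\bF)$, the index two derived subgroup of $\so_3(\bF)$, so \cite{BY2018} delivers a proxy $\XX \longleftrightarrow \om_3(\KK)$. Because the isomorphism $\so_3(\KK) \longrightarrow \pgl_2(\KK)$ carries the derived subgroup $\om_3(\KK)$ isomorphically onto the derived subgroup $\psl_2(\KK)$ of $\pgl_2(\KK)$, its restriction is again an efficiently computable isomorphism, and composing as above gives $\XX \longleftrightarrow \psl_2(\KK)$. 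The only points to verify are that the elements produced by \cite{BY2018} indeed lie in $\om_3(\KK)$ and that evaluating the restricted map requires no knowledge of the order of $\KK$; both are immediate from the constructions.

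The main obstacle is the inverse map $\so_3(\KK) \longrightarrow \pgl_2(\KK)$: recovering, from a $3 \times 3$ special orthogonal matrix $g$, a $2 \times 2$ matrix $x$, determined only up to scalar, whose conjugation action on $\sl_2(\KK)$ realises $g$. The forward (adjoint) direction is harmless, being quadratic in the matrix entries and using only ring operations in $\KK$, but its inversion is delicate: the entries of $x$ are tied to those of $g$ by quadratic relations, so a naive reconstruction calls for square roots in the black box field $\KK$, whose order we do not know. The crux of Section \ref{sec:Corollary-2} is therefore to organise this inversion so that it is a genuine homomorphism, lands inside $\pgl_2(\KK)$ rather than a quadratic extension, and above all avoids extracting square roots --- for instance by reading $x$ directly off the action of $g$ on a fixed $\sl_2$-triple and its root vectors, using only rational expressions in the coordinates of $g$.
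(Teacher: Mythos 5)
Your $\pgl_2$ case coincides with the paper's proof: by Remark \ref{rem:so3}, the key technical result of \cite{BY2018} applies directly to any black box group encrypting $\so_3(\bF)\simeq\pgl_2(\bF)$, and one composes the resulting proxy $\XX \longleftrightarrow \so_3^\sharp(\KK)$ with the change of basis of Section \ref{sec:change-of-basis} and the isomorphisms $\Phi, \Phi^{-1}$ of Proposition \ref{pro:om3-psl2}. No objection there.

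The $\psl_2$ case, however, contains a genuine gap. You assert that ``\cite{BY2018} delivers a proxy $\XX \longleftrightarrow \om_3(\KK)$'' for $\XX\encr\psl_2(\bF)$, and that the remaining points ``are immediate from the constructions.'' But $\om_3(\bF)$ is a proper index-two subgroup of $\so_3(\bF)$, not isomorphic to it, so the key result of \cite{BY2018} does \emph{not} apply verbatim to a $\psl_2$-encrypting group: its two-way structural proxy is constructed only for groups encrypting $\so_3(\bF)\simeq\pgl_2(\bF)$, which is exactly what Remark \ref{rem:so3} says. For an input encrypting $\psl_2(\bF)$, the route of \cite{BY2018} is indirect: one forms the auxiliary group $\XX^*$ generated inside $\XX\times\XX$ by the pairs $(\bx{s},\bx{s})$ and $(\bx{r},\bx{r}^{-1})$ over the tori $\SS$ and $\RR$, passes to the group $\XX^*/Z(\XX^*)\langle\bxg{\delta}\rangle\encr\pgl_2(\bF)$, and applies the proxy machinery there. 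The input group is then identified with the derived subgroup of this auxiliary group, but only one direction of that identification is computationally free (coordinate projection of pairs); computing the other direction --- given an arbitrary string of the input group, produce its image among the pairs, equivalently its matrix image --- is precisely the nontrivial step. That step is carried out by the ``white element'' argument of Section \ref{sec:morp-Y-sl2} (elements of the tori are white, their normalisers are white, centralisers of white involutions are white, every involution is white, every element is a product of involutions, hence white), and this is why the paper proves this case of Theorem \ref{corollary} as ``a slight modification of arguments of Section \ref{sec:sl2-proxy}'' rather than by the composition you propose. Your proposal assumes as given the very step that has to be constructed.

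A remark on your final paragraph: your idea of inverting $\Phi$ without square roots is in fact viable --- the condition that conjugation by $x$ induces $g$ on $\lsl_2(\KK)$ is the linear system $xU=g(U)\,x$, $U\in\lsl_2(\KK)$, in the four entries of $x$, and its solution space is one-dimensional since two invertible solutions differ by a matrix centralising $\lsl_2(\KK)$, hence by a scalar; the paper itself uses this linearisation trick in Section \ref{sec:morp-Y-sl2} to find an involution $A$ with $M^A=M^{-1}$. But your stated motivation is mistaken: square roots in $\KK$ are not out of reach, because a global exponent for $\KK^*$ is part of the data, so Tonelli--Shanks runs in time polynomial in $\log E$; Proposition \ref{pro:om3-psl2} inverts $\Phi$ exactly by extracting square roots of matrix entries, with the auxiliary non-square $\bxg{\gamma}$ absorbing the unknown scalar. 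So your alternative inversion is correct and arguably more elementary, but it is not forced, and it is not the route the paper takes.
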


\begin{corollary}\label{corollary2}
Let\/ $\XX$ be a black box group encrypting one of the groups $\sl_2(\bF), \psl_2(\bF)$ or $\pgl_2(\bF)$, where $\bF$ is the standard explicitly given finite field of known characteristic. Then there is a Las Vegas algorithm which constructs, in probabilistic time polynomial in $\log |\bF|$, an isomorphism
\[
\sl_2(\bF) \rightarrow \XX, \, \psl_2(\bF) \rightarrow \XX \mbox{ or }\pgl_2(\bF) \rightarrow \XX,
\]
respectively. The running time of our algorithm is in probabilistic polynomial time in $\log |\bF|$.
\end{corollary}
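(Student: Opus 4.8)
The plan is to reduce the Corollary to the structural proxy already produced by Theorems \ref{theo:proxy-sl2} and \ref{corollary}, followed by the construction of a single explicit field homomorphism. First I would apply the relevant theorem to $\XX$. Since $\bF$ is the standard explicitly given field $\bF=\bF_q$ with $q=p^n$ and $p$ known, a global exponent is available for free: we may take $E=|\sl_2(\bF_q)|=q(q^2-1)$ (or the corresponding group order in the other two cases), so that $\log E=O(\log|\bF|)$ and the invocation runs in time polynomial in $\log|\bF|$. This yields a black box field $\KK\encr\bF$ together with the proxy isomorphisms; in particular we obtain the map $\bxg{\psi}^{-1}\colon\sl_2(\KK)\longrightarrow\XX$ (respectively $\psl_2(\KK)\to\XX$ and $\pgl_2(\KK)\to\XX$), each evaluable in time polynomial in $\log|\bF|$. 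It now suffices to construct an explicit isomorphism $\sl_2(\bF)\to\sl_2(\KK)$, and this reduces entirely to exhibiting a field homomorphism $\bF\to\KK$ that can be evaluated on explicit inputs.

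Second, I would build the field map $\iota\colon\bF\to\KK$ as follows. The prime field embeds by $k\mapsto k\cdot 1_{\KK}$, computed by repeated doubling in $O(\log p)$ field operations. Writing the explicit field as $\bF=\bF_p[t]/(f)$ for a known irreducible $f$ of degree $n$, it remains only to choose the image $\beta\in\KK$ of $t$; the map $\iota\colon\sum_i c_i t^i\mapsto\sum_i (c_i\cdot 1_{\KK})\,\beta^i$ is then a ring homomorphism, and it is an isomorphism precisely when $f(\beta)=0$, since any root of the irreducible $f$ generates a subfield of degree $n=[\KK:\bF_p]$, hence all of $\KK$. To locate such a $\beta$ I would regard $f$ as a polynomial over the prime subfield of $\KK$ and run a probabilistic root-finding procedure (equal-degree factorisation in the style of Cantor--Zassenhaus) with all coefficient arithmetic carried out inside $\KK$: the polynomial $f$ splits completely over $\KK\cong\bF_{p^n}$, and repeatedly forming $\gcd\bigl(f(x),\,(x+\delta)^{(q-1)/2}-1\bigr)$ for random $\delta\in\KK$ separates the linear factors and returns a root with high probability. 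Finally I would transport a matrix $M=(m_{ij})\in\sl_2(\bF)$ to $(\iota(m_{ij}))\in\sl_2(\KK)$ and apply $\bxg{\psi}^{-1}$ to obtain the required isomorphism $\sl_2(\bF)\to\XX$; the $\psl_2$ and $\pgl_2$ cases are identical, using the proxy of Theorem \ref{corollary}.

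The main obstacle I anticipate is the root-finding step, because it must be executed over the black box field $\KK$ using only field arithmetic and randomness: no discrete logarithm and no identification of prime-field elements is permitted. The separating exponent $(q-1)/2$ has $O(\log q)$ bits, so each modular exponentiation modulo $f(x)$ costs $O(\log q)$ multiplications of polynomials of degree below $n$ whose coefficients are elements of $\KK$; since a single $\KK$-operation runs in time polynomial in $\log|\bF|$, the whole procedure stays within the claimed bound. I would also emphasise that the Corollary asks only for the one direction $\sl_2(\bF)\to\XX$, which is exactly what the forward field map $\iota$ supplies; the reverse direction would require expressing an arbitrary element of $\KK$ in the $\beta$-basis and ultimately recovering prime-field coordinates, a step that is genuinely delicate for black box fields and is happily unnecessary here.
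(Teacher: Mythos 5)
Your proposal is correct and follows essentially the same route as the paper: invoke the structural proxy of Theorems \ref{theo:proxy-sl2} and \ref{corollary} (the explicitly given field supplies a global exponent with $\log E = O(\log|\bF|)$), then reduce everything to a one-way field isomorphism $\bF \to \KK$ applied entrywise to matrices. The paper obtains that embedding by citing Maurer--Raub \cite{maurer07.427}, whose construction is precisely the root-finding argument you spell out, and your closing observation that only the forward direction $\bF \to \KK$ is needed matches the paper's remark that the resulting map is a ``half $\sl_2$-oracle,'' reversible only for small characteristic.
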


The proof of Corollary \ref{corollary2} is achieved by constructing an isomorphism from $\bF$ to $\KK$ and such construction is given in \cite{maurer07.427}. We shall note here that the isomorphism $\sl_2(\bF) \rightarrow \XX$ in Corollary \ref{corollary2} is a half $\sl_2$-oracle. Moreover, when the characteristic of $\bF$ is small, we can reverse the isomorphism from $\bF$ to $\KK$ by using the the results in \cite{maurer07.427} which gives us a full $\sl_2$-oracle, that is, two way isomorphism between $\sl_2(\bF)$ and $\XX$.

Notice that if $|\bF|< 7$ then all our theorems are obviously true and do not require methods developed in this paper, since we can list elements of $\YY$ and provide necessary isomorphisms with no difficulty in these cases. In technical results in this paper, we assume that $|\bF| \geqslant 7$. We note that all classical black box groups over small fields can be recognized efficiently by the algorithms in \cite{kantor01.168}.

 \section{Plan of proof of Theorem \ref{theo:proxy-sl2}}\label{sec:plan}

In the proof of Theorem \ref{theo:proxy-sl2}, we first redefine the equality of strings in $\YY \encr \sl_2(\bF)$ in the following way to be able to pass to the quotient group $\YY/Z(\YY)$:
\[
\bx{x} \equiv \bx{y} \iff \bx{xy^{-1}} \mbox{ is either identity or the central involution in } \YY.
\]
Then, we use algorithms developed in  \cite[Theorem 1.3]{BY2018} for the black box group $\YY/Z(\YY)$ to construct a black box field $\KK\encr \bF$, a black box group $\XX \encr \pgl_2(\bF)$, and computable, in polynomial time, homomorphisms
\[
\YY \longrightarrow \YY/Z(\YY) \longrightarrow \XX \longleftrightarrow \so_3(\KK).
\]
We note here that we can add and multiply elements and take additive and multiplicative inverses of (nonzero) elements of the black box field $\KK$ (see \cite[Section 9]{BY2018}), and also the two isomorphisms
\[
\XX \longleftrightarrow \so_3(\KK)
\]
are inverses of each other (check \cite[Section 11]{BY2018}).

It is well-known that $\so_3(\KK)$ is isomorphic to $\pgl_2(\KK)$ and we need to present an efficient algorithm constructing such an isomorphism. We deal with this problem in Section \ref{sec:sl2-proxy}.

The group $\so_3(\KK)$ arises in \cite{BY2018}  as the group of matrices from  $\gl_3(\KK)$ preserving the quadratic form with the matrix
 \[
 \bbm 1&0&0\\ 0&1&0 \\ 0&0&1\ebm;
 \]
we will denote this group as $\so_3^{\sharp}(\KK)$.

It turns out that it is much more convenient to compute in the orthogonal groups $\so_3(\KK)$ preserving the quadratic form with the matrix
 \[
 \bbm 0&0&1\\ 0&-2&0 \\ 1&0&0\ebm;
 \]
we will denote this group as $\so_3^{\flat}(\KK)$.

Of course the two groups $\so_3^{\sharp}(\KK)$ and $\so_3^{\flat}(\KK)$ are conjugate in $\gl_3(\KK)$; the computation of the conjugating (change of basis) matrix is very easy if $\KK$ contains $\sqrt{-1}$, but requires more attention if  $\KK$ does not contain $\sqrt{-1}$, see Section \ref{sec:change-of-basis}.

After that we get constructive homomorphisms
\[
 \YY \longrightarrow \YY/Z(\YY) \longrightarrow \XX \longleftrightarrow \so_3^\sharp(\KK) \longleftrightarrow \so_3^\flat (\KK) \longleftrightarrow \pgl_2(\KK)
\]
and focus on its restriction
\[
\YY/Z(\YY) \longrightarrow [\XX,\XX] \longleftrightarrow  \psl_2(\KK).
\]
We reverse the first homomorphism making it
\[
\YY/Z(\YY) \longleftrightarrow [\XX,\XX]
\]
(this step requires a careful analysis of the corresponding constructions from \cite{BY2018}) and then lift the resulting isomorphisms
\[
\YY/Z(\YY) \longleftrightarrow  \psl_2(\KK)
\]
to
\[
\YY  \longleftrightarrow  \sl_2(\KK).
\]
This is done in Section \ref{sec:sl2-proxy}

It will become clear that the appropriate fragments of this proof, together with \cite{carter1972}, provide a proof of Theorem \ref{corollary}; some additional details are given in Section \ref{sec:Corollary-2}.

\section{Orthogonal groups in two types of bases}
\label{sec:bilinear}

\subsection{Generalities on symmetric bilinear forms}

Let $V$ be a vector space of dimension $3$ over a black box field $\KK \encr \bF$, where $\bF$ is an unknown finite field of unknown odd characteristic. An important additional assumption that we are making is that we are given a computationally feasible global exponent for $\KK$, that is, a natural number $E$ such that $\bx{a}^E = \bx{1}$ for all $\bx{a} \in \KK \smallsetminus \{\bx{0}\}$, so that we can compute square roots in $\KK$, when they exist, by a version of the Tonelli-Shanks algorithm, \cite[Lemma 5.6]{BY2018}.

Assume that $\beta(\cdot, \cdot)$ is a non-degenerate symmetric bilinear form on $V$. It is well-known \cite[Section 1.4]{carter1972} that $\beta$ has Witt index $1$ and that there are only two classes of equivalence of non-degenerate symmetric bilinear forms on $V$, and if $\beta$ belongs to one of these classes, then $\bxg{\epsilon}\beta$, where $\bxg{\epsilon}$ is not a square root in $\KK$, belongs to another class.

We set $Q(v)=\beta(v,v,)$; this is the \emph{quadratic form} associated with $\beta$. (In the literature the quadratic form associated with $\beta$ is frequently taken to be $Q(v)=\frac{1}{2}\beta(v,v)$; we feel that our choice simplifies some our calculations.)

Notice that for arbitrary $\bxg{\epsilon}\in \KK$ the orthogonal groups $\so(V, \beta)$ and $\so(V, \bxg{\epsilon}\beta)$ coincide \emph{elementwise}.

It is important to keep this basic observation in mind because in the algorithms that we develop in this paper, the orthogonal  groups $\so(V, \beta)$ will be their \emph{sets of inputs}. Moreover, they will be given to us as subsets of the matrix group $\gl_3(\KK)$. Writing orthogonal transformations from $\so(V, \beta)$ in different bases of $V$ introduces some subtle changes which we will have to take into account.

\subsection{Two types of bases: spinor and canonical}

We shall call a basis $\mathcal{B} = \{\,v_1,v_2,v_3\,\}$ of $V$ a \emph{spinor} basis if
\[
\beta(v_i, v_j) = \left\{\begin{array}{ll}
                \bxg{\lambda} \mbox{ for some fixed } \bx{1} \ne \bxg{\lambda}\in \KK & \mbox{if } i=j\\
                \bx{0} & \mbox{if } i\ne j
                \end{array} \right. .
\]
In a spinor basis, the quadratic form $Q$ associated with $\beta$ is written by the scalar matrix $\bxg{\lambda}{ I}$, and we will denote the group of matrices which preserves this form as $O^{\sharp}_3(\KK)$, the corresponding special orthogonal group as $\so_3^{\sharp}(\KK)$, and its commutator subgroup as $\om_3^{\sharp}(\KK)$.

Therefore
\[
O^{\sharp}_3(\KK) = \{\, M\in \gl_3(\KK) : M^t \cdot \bxg{\lambda}{ I}\cdot M  = \bxg{\lambda}{ I} \,\},
\]
or, which is the same,
\[
O^{\sharp}_3(\KK) = \{\, M\in \gl_3(\KK) : M^t  M  = { I} \,\},
\]
which is the standard definition of the orthogonal group.

We shall call a basis $\mathcal{C} = \{\, e, w, f\,\}$ of $V$  \emph{canonical}  if the quadratic form $Q$ is written in it by the matrix $\bxg{\lambda} J$, where
\[
J = \begin{bmatrix} 0 & 0 & 1 \\ 0 & -2 &  0 \\ 1 & 0 & 0\end{bmatrix}.
\]

We define  $O^{\flat}_3(\KK)$ as
\[
O^{\flat}_3(\KK) =  \{\, M\in \gl_3(\KK) : M^t \cdot \bxg{\lambda}{ J}\cdot M  = \bxg{\lambda}{ J} \,\},
\]
or, which is the same,
\[
O^{\flat}_3(\KK) = \{\, M\in \gl_3(\KK) : M^t J  M  = { J} \,\},
\]
with $\so_3^{\flat}(\KK)$ and $\om_3^{\flat}(\KK)$ defined in an obvious way.

To summarize, the subgroups $O^{\sharp}_3(\KK)$ and $O^{\flat}_3(\KK)$ in $\gl_3(\KK)$ represent the same orthogonal group $O_3(V, \beta)$ written in two different bases, one of them is spinor, another canonical.  The groups $O^{\sharp}_3(\KK)$ and $O^{\flat}_3(\KK)$ do not change if we replace the corresponding symmetric bilinear form $\beta$ by its non-zero scalar multiple $\bxg{\lambda}\beta$.

\subsection{Change of basis}
\label{sec:change-of-basis}

So we have to find the change of basis matrix that conjugates $\so_3^{\sharp}(\KK)$ to $\so_3^{\flat}(\KK)$ in $\gl_3(\KK)$.

Let us take a canonical basis $\mathcal{C}=\{e,w,f\}$ with $\bxg{\lambda} = \bx{1}$. Then
\[
\beta(e,e) = \beta(f,f) = \beta(e,w) = \beta(f,w) = 0, \; \beta (e,f) = 1, \; \beta(w,w) = -2.
\]
In every finite field $\KK$ of odd characteristic there exist $\bx{a}, \bx{b} \in \KK$ such that $\bx{a}^2+\bx{b}^2 = -\bx{1}$, they can be easily found, in probabilistic time polynomial in $\log E$, with the help of the Tonelli-Shanks algorithm, \cite[Lemma 5.6]{BY2018}. Note that we can compute such $a$ and $b$ without knowing the characteristic of $\KK$. Then a direct calculation shows that the vectors
\bea
v_1 &=& e+f\\
v_2 &=& -\bx{b}e + \bx{a}w +  \bx{b}f\\
v_3 &=& \bx{a}e +\bx{b}w - \bx{a}f
\eea
form a spinor basis, let us call it $\mathcal{B}$, and we have the change of basis matrix from $\mathcal{C}$ to $\mathcal{B}$
\[
P=\bbm
\bx{1}&-\bx{b}&\bx{a}\\
\bx{0}&\bx{a}&\bx{b}\\
\bx{1} &\bx{b}&-\bx{a}
\ebm.
\]
If  $\KK$ contains square root of $-\bx{1}$, say, $\bxg{\epsilon}^2 = -\bx{1}$, then we can take $\bx{a}= \bxg{\epsilon}$,  $\bx{b} = \bx{0}$, and get a simpler transition matrix
\[
P=\bbm
\bx{1}&\bx{0}&\bxg{\epsilon}\\
\bx{0}&\bxg{\epsilon}&\bx{0}\\
\bx{1} &\bx{0}&-\bxg{\epsilon}
\ebm.
\]

\subsubsection*{Analysis of this calculation}

The 3-dimensional vector space $V= \KK^3$ with a non-degenerate symmetric bilinear form $\beta(\cdot,\cdot)$ has a model which is very natural in the context of this paper: the space (actually, the Lie algebra) $\lsl_2(\KK)$ of $2\times 2$ matrices over $\KK$ of trace $\bx{0}$ with
\[
\beta(U,V) = -{\rm Tr}(UV).
\]
For the space $\lsl_2(\KK)$, the matrices
\[
E = \bbm \bx{0} & \bx{0} \\ -\bx{1} & \bx{0}\ebm, \quad W = \bbm \bx{1} & \bx{0} \\ \bx{0} & -\bx{1} \ebm, \quad F = \bbm \bx{0} & \bx{1}\\ \bx{0} & \bx{0} \ebm
\]
form a canonical basis; applying construction of a spinor basis as described above, we get
\[
V_1 = \bbm \bx{0} & \bx{1}\\ -\bx{1} & \bx{0}\ebm, \quad
V_2 = \bbm \bx{a} & \bx{b} \\ \bx{b} & -\bx{a}  \ebm,
\quad V_3 = \bbm \bx{b} & -\bx{a}\\ -\bx{a} &-\bx{b} \ebm.
\]
The matrices $V_1, V_2, V_3$ are generators of three cyclic subgroups of order $4$ in a quaternion group $\QQ < \gl_2(\KK)$ and they satisfy the following relations:
\[
V_1^2 = V_2^2=V_3^2 = -1, \quad V_1V_2 = V_3,\quad V_2V_3=V_1, \quad V_3V_1 = V_2.
\]
Our previous paper \cite[Section 9]{BY2018} explains, in a pure black box and hence coordinate-free context,  why finding a quaternion subgroup $\QQ$ amounts to constructing of a spinor basis in $\lsl_2(\KK)$. In \cite[Section 8]{BY2018}, computing in a black box group $\XX \encr \pgl_2(\bF)$, we construct the image $\overline{\QQ}$ of $\QQ$ in $\XX$ and its normaliser $N_{\XX}(\overline{\QQ}) \encr \Sym_4$, and this is one of the key steps in the algorithm developed in \cite{BY2018}.

\subsection{Isomomorphisms $\pgl_2(\KK) \longleftrightarrow \so_3^{\flat}(\KK)$} \label{sec:isos}

\begin{proposition}\label{pro:om3-psl2}
Let $\KK$ be a black box field and let $E$ be a global exponent for the multiplicative group $\KK^*$. Then there is a Las Vegas algorithm  which constructs, in probabilistic time polynomial in $\log E$,  two-way isomorphism
\[
\Phi:\pgl_2(\KK) \longrightarrow \so_3^{\flat}(\KK), \quad \Phi^{-1}: \so_3^{\flat}(\KK)\longrightarrow \pgl_2(\KK).
\]
The algorithm runs in time polynomial in $\log E$.
\end{proposition}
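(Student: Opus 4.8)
The plan is to realize $\Phi$ as the adjoint (conjugation) action of $\pgl_2(\KK)$ on its Lie algebra $\lsl_2(\KK)$ written in the canonical basis, and to realize $\Phi^{-1}$ by recovering a group element from its twisted symmetric square. Recall from the analysis following Section~\ref{sec:change-of-basis} that the triple $E, W, F$ is a canonical basis of $\lsl_2(\KK)$ for the form $\beta(U,V) = -\Tr(UV)$; a direct computation gives the Gram matrix $\bigl(\beta(X_i,X_j)\bigr) = J$ with $(X_1,X_2,X_3)=(E,W,F)$, so that $\so(\lsl_2,\beta)$ written in this basis is precisely $\so_3^{\flat}(\KK)$ (the scalar $\bxg{\lambda}$ being irrelevant, we take $\bxg{\lambda} = \bx{1}$).

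For the forward map I would set $\Phi(g)$ to be the matrix of the linear map $X \mapsto gXg^{-1}$ on $\lsl_2(\KK)$ expressed in the basis $(E,W,F)$: its three columns are the coordinate vectors of $gEg^{-1}$, $gWg^{-1}$, $gFg^{-1}$, computed by a bounded number of additions, multiplications and inversions in $\KK$. Conjugation preserves $\Tr(UV)$, hence $\beta$, so $\Phi(g)\in O^{\flat}_3(\KK)$; and since the adjoint representation is isomorphic to $\mathrm{Sym}^2$ of the standard representation twisted by $\det^{-1}$, one has $\det\Phi(g) = (\det g)^3/(\det g)^3 = \bx{1}$, placing $\Phi(g)$ in $\so_3^{\flat}(\KK)$. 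The kernel of $g\mapsto\Phi(g)$ is exactly the scalar matrices, so $\Phi$ is a well-defined injective homomorphism on $\pgl_2(\KK)$; comparing orders $|\pgl_2(\bF)| = |\so_3(\bF)| = q(q^2-1)$ with $q=|\bF|$ shows it is an isomorphism.

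The substance of the proposition is the constructive inverse. Here I would exploit the identification above: fixing once and for all the constant change-of-basis matrix $C$ between $(E,W,F)$ and the monomial basis, the matrix $C^{-1}\Phi(g)C$ equals $\mathrm{Sym}^2(g)$ divided by $\det g$, whose entries are, up to a common scalar, the degree-two monomials $\alpha^2,\alpha\beta,\alpha\gamma,\ldots,\delta^2$ in the entries of $g=\left(\begin{smallmatrix}\alpha&\beta\\\gamma&\delta\end{smallmatrix}\right)$. Given $M\in\so_3^{\flat}(\KK)$, I would form $C^{-1}MC$, select a nonzero corner entry — at least one of $\alpha^2,\beta^2,\gamma^2,\delta^2$ is nonzero because $g$ is invertible — and recover the ratios $\beta/\alpha,\gamma/\alpha,\delta/\alpha$ as quotients of entries (the mixed central entry $\alpha\delta+\beta\gamma$ being handled by subtracting the already-recovered product $\beta\gamma$). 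This determines $g$ up to a scalar, i.e. a well-defined element $\Phi^{-1}(M)\in\pgl_2(\KK)$, again in a bounded number of field operations. Notably no square roots are required, since we only ever form ratios.

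The remaining verifications — that $\Phi^{-1}$ is independent of the chosen pivot, and that $\Phi\circ\Phi^{-1}$ and $\Phi^{-1}\circ\Phi$ are the identity — are routine and reduce to the fact that the entries of $\mathrm{Sym}^2(g)$ determine $g$ up to scalar. Since each of $\Phi$ and $\Phi^{-1}$ uses only a bounded number of field operations in $\KK$, and these run in probabilistic time polynomial in $\log E$, both maps run in the required time. The main obstacle I anticipate is precisely the clean, choice-free realization of $\Phi^{-1}$ together with the pivot case analysis guaranteeing a usable nonzero corner entry; everything else is a direct translation of the classical isomorphism $\pgl_2\cong\so_3$ into black box field arithmetic.
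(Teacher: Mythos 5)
Your proof is correct, and your forward map coincides with the paper's: both realize $\Phi$ as the conjugation action of $\gl_2(\KK)$ on $\lsl_2(\KK)$ in the canonical basis $(E,W,F)$, yielding the explicit matrix of Equation (\ref{eq:Phi}) whose kernel is the scalars. Where you genuinely diverge is the inverse map, and your route is arguably cleaner. The paper solves the system (\ref{eq:system}) by extracting square roots: if $\bx{b}_{11}$ is a non-square, it multiplies all entries by a non-square $\bxg{\gamma}$ so that $\bxg{\delta}\bxg{\gamma}$ becomes an (uncomputed) square $\bxg{\epsilon}^2$, then computes $\sqrt{\bx{b}'_{11}} = \bx{a}\bxg{\epsilon}$ and the other entries by Tonelli--Shanks \cite[Lemma 5.6]{BY2018}, recovering the representative $\bxg{\epsilon}A$ of the projective class; this is exactly where the Las Vegas randomness and the global exponent enter their proof. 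You instead recover the representative $\bx{a}^{-1}A$ purely rationally: pivot on a nonzero corner entry (the same case analysis as the paper's observation that one of $\bx{b}_{11}, \bx{b}_{13}, \bx{b}_{31}, \bx{b}_{33}$ is nonzero), read off two ratios as quotients of entries, and obtain the fourth entry from the middle entry by subtracting the already-recovered product. Since scalar matrices lie in the kernel of $\Phi$, either representative defines the same element of $\pgl_2(\KK)$, so both inverses are correct; but your version needs no square roots at all, hence is deterministic modulo the black box field arithmetic (which still costs time polynomial in $\log E$), and it dispenses with the paper's somewhat delicate ``denote $\bxg{\epsilon}$ but do not compute it'' bookkeeping. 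Two small points you should make explicit if this is written up: surjectivity of $\Phi$ onto $\so_3^\flat(\KK)$ (which you correctly get from injectivity plus the order count $q(q^2-1)$) is what licenses applying the recovery procedure to an arbitrary input $M$, and the divisions by $\bx{2}$ in the pivot formulas use oddness of the characteristic; neither is an obstacle, and the paper is equally silent on the first.
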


\begin{proof} The required isomorphism comes from the action of $\gl_2(\KK)$ on the Lie algebra $\mathfrak{l} = \lsl_2(\KK)$ of $2 \times 2$ matrices over $\KK$ of trace $\bx{0}$. Following Section \ref{sec:change-of-basis}, we choose a canonical basis in $\mathfrak{l}$ as
\[
E = \bbm \bx{0} & \bx{0} \\ -\bx{1} & \bx{0}\ebm, \quad W = \bbm \bx{1} & \bx{0} \\ \bx{0} & -\bx{1} \ebm, \quad F = \bbm \bx{0} & \bx{1}\\ \bx{0} & \bx{0} \ebm.
\]
Let
\[
A =  \bbm \bx{a} & \bx{b} \\ \bx{c} & \bx{d}\ebm \in \gl_2(\KK),
\]
then
\[
A^{-1} = \bbm \bx{a} & \bx{b} \\ \bx{c} & \bx{d}\ebm^{-1} = \frac{\bx{1}}{\bx{ad-bc}}\bbm \bx{d} & \bx{-b} \\ \bx{-c} & \bx{a}\ebm,
\]
and it is easy to compute that
\bea
E^{A} &=& \frac{\bx{1}}{\bx{ad-bc}}\bbm \bx{ab} & \bx{b}^2 \\ -\bx{a}^2 & \bx{-bd}\ebm\\
&=& \frac{\bx{1}}{\bx{ad-bc}} \left( \bx{a}^2E +\bx{ab}\,W +\bx{b}^2F\right),\\
W^{A} &=& \frac{\bx{1}}{\bx{ad-bc}}\bbm \bx{ad + bc} & \bx{2bd} \\ -\bx{2ac}^2 & \bx{-ad -bc}\ebm\\
&=& \frac{\bx{1}}{\bx{ad-bc}} \left( \bx{2ac}E +(\bx{ad+bc})\,W +\bx{2bd}F\right),\\
F^{A} &=& \frac{\bx{1}}{\bx{ad-bc}}\bbm \bx{cd} & \bx{d}^2 \\ -\bx{c}^2 & \bx{-cd}\ebm\\
&=& \frac{\bx{1}}{\bx{ad-bc}} \left( \bx{c}^2E +\bx{cd}\,W +\bx{d}^2F\right).
\eea
Therefore the conjugation by the matrix $A=\bbm \bx{a} & \bx{b} \\ \bx{c} & \bx{d}\ebm$ is written in the basis $E$, $W$, $F$ by the matrix
\[
\frac{\bx{1}}{\bx{ad-bc}}\bbm
\bx{a}^2 & \bx{2ac}   & \bx{c}^2\\
\bx{ab}  & \bx{ad+bc} & \bx{cd}\\
\bx{b^2} & \bx{2bd}   & \bx{d}^2
\ebm,
\]
and we have a homomorphism
\begin{equation} \label{eq:Phi}
A = \bbm \bx{a} & \bx{b} \\ \bx{c} & \bx{d}\ebm \mapsto
\bbm
\bx{a}^2\bxg{\delta} & \bx{2ac}\bxg{\delta}   & \bx{c}^2\bxg{\delta}\\
\bx{ab}\bxg{\delta}  & (\bx{ad+bc})\bxg{\delta} & \bx{cd}\bxg{\delta}\\
\bx{b^2}\bxg{\delta} & \bx{2bd}\bxg{\delta}   & \bx{d}^2\bxg{\delta}
\ebm, \quad \bxg{\delta} = \frac{\bx{1}}{\bx{ad-bc}},
\end{equation}
from $\gl_2(\KK)$ to $\so_3^\flat(\KK)$. It is easy to check that the kernel of this homomorphism is the group of scalar matrices and results in an isomorphism
\[
\Phi: \pgl_2(\KK) \longrightarrow \so_3^\flat(\KK).
\]

The inverse isomomorphism $$\Phi^{-1} :\so_3^{\flat}(\KK) \longrightarrow \pgl_2(\KK)$$ can now be found with ease. Note that, by \cite[Lemma 5.6]{BY2018}, we can construct, if they exist, the square roots of the elements of the black box field $\KK$ in polynomial time.

Assume that we are given a matrix
\[
B = \bbm
\bx{b}_{11} & \bx{b}_{12} & \bx{b}_{13} \\
\bx{b}_{21} & \bx{b}_{22} & \bx{b}_{23} \\
\bx{b}_{31} & \bx{b}_{32} & \bx{b}_{33}
\ebm \in \so_3^\flat(\KK)
\]
and wish to find $A =\bbm \bx{a} & \bx{b} \\ \bx{c} & \bx{d}\ebm  $ such that $\Phi(A) = B$. Because of Equation (\ref{eq:Phi}), this amounts to solving the system of equations in variables $\bx{a},\, \bx{b},\, \bx{c},\, \bx{d}$
\begin{equation} \label{eq:system}
\bbm
\bx{a}^2\bxg{\delta} & \bx{2ac}\bxg{\delta}   & \bx{c}^2\bxg{\delta}\\
\bx{ab}\bxg{\delta}  & (\bx{ad+bc})\bxg{\delta} & \bx{cd}\bxg{\delta}\\
\bx{b^2}\bxg{\delta} & \bx{2bd}\bxg{\delta}   & \bx{d}^2\bxg{\delta}
\ebm = \bbm
\bx{b}_{11} & \bx{b}_{12} & \bx{b}_{13} \\
\bx{b}_{21} & \bx{b}_{22} & \bx{b}_{23} \\
\bx{b}_{31} & \bx{b}_{32} & \bx{b}_{33}
\ebm
\end{equation}
It is easy to see that at least one of the matrix elements $\bx{b}_{11},\, \bx{b}_{13},\, \bx{b}_{31},\, \bx{b}_{33}$ is not zero; assume that $\bx{b}_{11} \ne \bx{0}$, other cases can be treated similarly.

If $\bx{b}_{11}= \bx{a}^2\bxg{\delta}$ has no square root in $\KK$, then $\bxg{\delta}$ also has no square root.  In that case, pick some $\bxg{\gamma}$ which is not a square root in $\KK$; alternatively set $\bxg{\gamma} = \bx{1}$. In the both cases $\bxg{\delta}\bxg{\gamma}$ is a square and, for the sake of argument, \emph{denote} (but do not compute -- we cannot compute because we do not know $\bxg{\delta}$)
\[
\bxg{\epsilon}^2 = \bxg{\delta}\bxg{\gamma},
\]
and \emph{compute}
\[
\bx{b}'_{ij} = \bx{b}_{ij}\bxg{\gamma} \; \mbox{ for } \; i, j = 1,2,3.
\]
This allows us to rewrite Equation (\ref{eq:system}) as
\begin{equation} \label{eq:rearranged}
\bbm
\bx{a}^2\bxg{\epsilon}^2 & \bx{2ac}\bxg{\epsilon}^2   & \bx{c}^2\bxg{\epsilon}^2\\
\bx{ab}\bxg{\epsilon}^2  & (\bx{ad+bc})\bxg{\epsilon}^2 & \bx{cd}\bxg{\epsilon}^2\\
\bx{b^2}\bxg{\epsilon}^2 & \bx{2bd}\bxg{\epsilon}^2   & \bx{d}^2\bxg{\epsilon}^2
\ebm = \bbm
\bx{b}'_{11} & \bx{b}'_{12} & \bx{b}'_{13} \\
\bx{b}'_{21} & \bx{b}'_{22} & \bx{b}'_{23} \\
\bx{b}'_{31} & \bx{b}'_{32} & \bx{b}'_{33},
\ebm
\end{equation}
which can be immediately solved:
\bea
 \sqrt\bx{{b}'_{11}} &=& \bx{a}\bxg{\epsilon}\\
 \frac{\bx{b}'_{21}}{\sqrt\bx{{b}'_{11}}} &=& \bx{b}\bxg{\epsilon}\\
   \frac{\bx{b}'_{12}}{2\sqrt\bx{{b}'_{11}}} &=& \bx{c}\bxg{\epsilon}\\
   \frac{2\bx{b}'_{23}\cdot \sqrt\bx{{b}'_{11}}}{\bx{b}'_{12}}  &=& \bx{d}\bxg{\epsilon},
\eea
which yields us the matrix
\[
\bxg{\epsilon}\bbm \bx{a} & \bx{b} \\ \bx{c} & \bx{d}\ebm = \bxg{\epsilon}A
\]
which is the same element of $\pgl_2(\KK)$ as $A$. Notice that we do not compute $\bxg{\delta}$ and $\bxg{\epsilon}$.

This establishes the isomomorphism $$\Phi^{-1} :\so_3^{\flat}(\KK) \longrightarrow \pgl_2(\KK)$$
\end{proof}

\section{Construction of a proxy for $\YY\encr \sl_2$}\label{sec:sl2-proxy}

In this section, we present the proof of Theorem \ref{theo:proxy-sl2}. The following lemma is crucial.

\begin{lemma}\label{lemma:existence_and_uniquiness}
Let $X$ and $Y$ be two groups isomorphic to $\sl_2(\bF)$ over a finite field\/ $\bF$ of odd characteristic, then any surjective homomorphism
\[
X \longrightarrow Y/Z(Y)
\]
can be lifted to a homomorphism
\[
X \longrightarrow Y,
\]
and this homomorphism is unique.
\end{lemma}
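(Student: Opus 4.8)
The plan is to turn the lifting problem into a \emph{splitting} problem for a central extension of $X$, and then to exploit two structural features of $\sl_2(\bF)$: that it is perfect, and that its Schur multiplier has odd order. Throughout write $Z = Z(Y)$, a group of order $2$ since $\bF$ has odd characteristic, and let $\pi\colon Y \to \bar Y := Y/Z$ be the quotient map; the given data is a surjection $\phi\colon X \to \bar Y$.

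First I would form the fibre product
\[
P = \{\,(x,y)\in X\times Y : \phi(x)=\pi(y)\,\},
\]
with first-coordinate projection $p\colon P\to X$. Since $\pi$ is onto, $p$ is onto; its kernel is $\{1\}\times Z$, which is central in $P$ because $Z$ is central in $Y$. Thus $1 \to Z \to P \xrightarrow{p} X \to 1$ is a central extension by a group of order $2$. The key observation is that a homomorphic section $s\colon X\to P$ of $p$ is exactly a lift of $\phi$: writing $s(x)=(x,\tilde\phi(x))$, the membership $(x,\tilde\phi(x))\in P$ says precisely $\pi\circ\tilde\phi=\phi$, and $s$ is a homomorphism iff $\tilde\phi$ is. So it suffices to show $p$ splits and that the splitting is unique.

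For existence I would use that $X\cong\sl_2(\bF)$ is perfect (which holds under the standing assumption $|\bF|\geq 7$). Then $p([P,P])=[X,X]=X$, so $P=[P,P]\cdot Z$ and $[P,P]\cap Z$ is a subgroup of the order-$2$ group $Z$. If $[P,P]\cap Z=1$, then $p$ restricts to an isomorphism $[P,P]\to X$ whose inverse is the desired section. The only alternative is $[P,P]=P$, i.e. $P$ is itself a perfect central extension of $X$ by $\bZ/2$; such an extension would force a quotient of even order of the Schur multiplier of $X$. But for $X\cong\sl_2(q)$ with $q$ odd this multiplier has odd order --- it is trivial except when $q=9$, where it is cyclic of order $3$ (Steinberg's determination of the Schur multipliers of universal Chevalley groups). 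This excludes the second case, so a section exists and $\tilde\phi$ is a lift.

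For uniqueness, given two lifts $\tilde\phi_1,\tilde\phi_2$ I would set $c(x)=\tilde\phi_1(x)^{-1}\tilde\phi_2(x)$. Applying $\pi$ shows $c(x)\in Z$, and centrality of $Z$ gives $c(xy)=c(x)c(y)$, so $c\colon X\to Z$ is a homomorphism into an abelian group. Since $X$ is perfect this forces $c\equiv 1$, i.e. $\tilde\phi_1=\tilde\phi_2$. The step I expect to be the real obstacle is ruling out the perfect extension $P$ in the existence argument: this is exactly the input that the Schur multiplier of $\sl_2(q)$ ($q$ odd) is of odd order, equivalently that $H^2(X,\bZ/2)\cong\mathrm{Hom}(H_2(X,\bZ),\bZ/2)$ vanishes, and it is the only place where more than formal group theory is needed.
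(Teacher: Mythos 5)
Your proof is correct, and it takes a genuinely different route from the paper's. The paper disposes of the lemma in one sentence, by reduction to the well-known fact that every automorphism of $Y/Z(Y)\cong\psl_2(\bF)$ lifts uniquely to an automorphism of $Y\cong\sl_2(\bF)$: since the given surjection $X\to Y/Z(Y)$ necessarily has kernel $Z(X)$, it factors through an isomorphism $X/Z(X)\to Y/Z(Y)$, and composing with a fixed isomorphism between $X$ and $Y$ turns the lifting problem into the automorphism-lifting statement, which the paper simply cites. You instead prove the lifting statement from scratch: you recast it as the splitting of the central extension $1\to Z\to P\to X\to 1$ built from the fibre product, split it using perfectness of $X$ together with the fact that the Schur multiplier of $\sl_2(q)$, $q$ odd, has odd order (trivial except for the $\bZ/3$ at $q=9$), and get uniqueness from the vanishing of homomorphisms $X\to Z$, again by perfectness. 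This is precisely the universal-central-extension machinery that underlies the paper's ``well-known property,'' so in effect you have unpacked the citation into a self-contained argument; your version has the added virtue of isolating exactly where odd characteristic and the $q=9$ exception enter, and your observation that one can argue directly from $H^2(X,\bZ/2)=0$ would even let you skip the case analysis on $[P,P]\cap Z$. The one caveat is that your argument needs $X$ perfect, hence fails for $\bF=\bF_3$ (where $\sl_2(3)$ is not perfect), whereas the lemma as stated allows any odd characteristic; but the paper's standing assumption $|\bF|\geqslant 7$, which you explicitly invoke, covers this, and the paper handles small fields separately by exhaustive listing. The paper's route buys brevity at the cost of resting on an external fact; yours buys a complete and transparent proof.
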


\begin{proof}
The proof immediately follows from the well-known property: every automorphism of $Y/Z(Y)$ can be lifted to an automorphism of $Y$, and this automorphism of $Y$ is unique.
\end{proof}

To prove Theorem \ref{theo:proxy-sl2}, we need some details of the constructions from \cite{BY2018} which we shall give a summary here.  Let $\YY \encr \sl_2(\bF)$. We construct two cyclic subgroups $\SS$ (torus of order twice odd number) and $\RR$ (torus containing an element of order 4) in $\YY$ and form the direct product $\YY \times \YY$. Then, we consider the black box subgroup $\XX^*$ which is generated in $\YY \times \YY$ by the pairs $(\bx{s}, \bx{s})$ for $\bx{s} \in \SS$ and  $(\bx{r}, \bx{r}^{-1})$ for $\bx{r} \in \RR$. Now, $\XX = \XX^*/Z(\XX^*)\langle \bxg{\delta}\rangle\encr \pgl_2(\bF)$, where $\bxg{\delta}$ is the involution swapping the two copies of $\YY$ in $\YY\times \YY$. By the results in \cite{BY2018}, these constructions lead up to a construction of a black box field $\KK\encr \bF$ and the morphisms
\[
\XX \longleftrightarrow \so_3^\sharp(\KK).
\]

\subsection{Construction of the morphism $\sl_2(\KK)\rightarrow \YY$}\label{sec:morp-sl2-Y} By the construction of the morphisms

\[
\XX \longleftrightarrow \so_3^\sharp(\KK)
\]
from \cite{BY2018} and also
\[
\so_3^\flat(\KK) \longleftrightarrow  \pgl_2(\KK)
\]
in Section \ref{sec:isos} together with
\[
\so_3^\sharp(\KK) \longleftrightarrow \so_3^\flat(\KK)
\]
in Section \ref{sec:change-of-basis} we have a chain of morphisms
\[
\XX \longleftrightarrow \so_3^\sharp(\KK) \longleftrightarrow \so_3^\flat(\KK) \longleftrightarrow  \pgl_2(\KK).\]
Reading this diagram from the right to the left and restricting the map to $\psl_2(\KK)$, we can get a chain of morphisms
\[
 \psl_2(\KK) \longrightarrow \Omega_3^\sharp(\KK) \longrightarrow \XX^*/Z(\XX^*),
\]
then we expand it to
\[
\sl_2(\KK) \longrightarrow \psl_2(\KK) \longrightarrow \Omega_3^\sharp(\KK) \longrightarrow \XX^*/Z(\XX^*) \longrightarrow \YY/Z(\YY),
\]
where the last arrow is induced by the natural projection of $\XX^* = \YY \times \YY$ on its direct factor. Hence, we have a morphism
\[
\bxg{\phi}: \sl_2(\KK) \longrightarrow \YY/Z(\YY).
\]

Now, we shall lift this morphism $\bxg{\phi}$ to the desired morphism
\[
\bxg{\psi}: \sl_2(\KK) \longrightarrow \YY.
\]

Let $\bx{z} \in Z(\YY)$ be the central involution of $\YY$. If $\bx{x} \in\sl_2(\KK)$ then $\bxg{\phi}(\bx{x})$ is a coset in $\YY$ made of two elements, say $ \bx{y}$ and $\bx{yz}$. If $\bx{x}$ is of odd order then one of the elements in the coset $\{\bx{y},\bx{yz}\}$ has odd order, and, by Lemma \ref{lemma:existence_and_uniquiness}, is equal to the image $\bxg{\psi}(\bx{x})$ of $\bx{x}$.

It is well-known that every matrix $\bx{x} \in \sl_2(\KK)$ can be written as a product of $k\leqslant 4$  transvections, $\bx{x} = \bx{x}_1 \cdots \bx{x}_k$; an explicit formulae are in \cite[pp.~81--82]{carter1972}. Indeed,
if
\[
\bbm \bx{a} & \bx{b} \\ \bx{c} & \bx{d} \ebm \in \sl_2(\KK)
\]
and $\bx{c} \ne \bx{0}$, then
\[
\bbm \bx{a} & \bx{b} \\ \bx{c} & \bx{d} \ebm  = \bbm \bx{1} & (\bx{a}-\bx{1})\bx{c}^{-1} \\ \bx{0} & \bx{1} \ebm
    \bbm \bx{1} & \bx{0} \\ \bx{c} & \bx{1} \ebm
    \bbm \bx{1} & (\bx{d} - \bx{1})\bx{c}^{-1} \\ \bx{0} & \bx{1} \ebm.
\]
If $\bx{b} \ne \bx{0}$ we have
\[
\bbm \bx{a} & \bx{b} \\ \bx{c} & \bx{d} \ebm  = \bbm \bx{1} & \bx{0} \\ (\bx{d} - \bx{1})\bx{b}^{-1} & \bx{1} \ebm
    \bbm \bx{1} & \bx{b} \\ \bx{0} & \bx{1} \ebm
    \bbm \bx{1} & \bx{0} \\ (\bx{a} - \bx{1})\bx{b}^{-1} & \bx{1} \ebm.
\]
If $\bx{b} = \bx{c} = \bx{0}$ we have
\[
\bbm \bx{a} & \bx{0} \\ \bx{0} & \bx{a}^{-1} \ebm =  \bbm \bx{1} & \bx{0} \\ \bx{a}^{-1} -\bx{1} & \bx{1} \ebm
        \bbm \bx{1} & \bx{1} \\ \bx{0} & \bx{1} \ebm
        \bbm \bx{1} & \bx{0} \\ \bx{a}-\bx{1} & \bx{1} \ebm
        \bbm \bx{1} & \bx{-a}^{-1} \\ \bx{0} & \bx{1} \ebm.
\]
Since we work in fields of odd characteristic, transvections are elements of odd order, and the previous argument allows us to compute $\bxg{\psi}(\bx{x})$ as
\[
\bxg{\psi}(\bx{x}) = \bxg{\psi}(\bx{x}_1) \cdots \bxg{\psi}(\bx{x}_k).
\]

\subsection{Construction of the morphism $\YY \rightarrow \sl_2(\KK)$}\label{sec:morp-Y-sl2}
To construct the reverse morphism presented in Section \ref{sec:morp-sl2-Y}, it is important to observe that the morphism
\[
\bxg{\psi}:\sl_2(\KK) \longrightarrow \YY
\]
is reversible on $\SS$ and $\RR$ since we have natural maps
\[
\SS \longrightarrow \XX^* \mbox{ and } \RR \longrightarrow \XX^*
\]
and we can map them back to $\sl_2(\KK)$.

Now we show how to reverse $\bxg{\psi}$ on the entire $\YY$. Let us denote $\bar{\YY} = \YY/Z(\YY)$.  Abusing notation, we may use the same notation for elements in $\bar{\YY} \encr \psl_2$ as for elements in $\YY\encr \sl_2$.

Indeed it will suffice to reverse the  map $\bxg{\rho}$ induced by $\bxg{\psi}$  on
\[
\bxg{\rho}: \psl_2(\KK) \longrightarrow \bar{\YY},
\]
and have a morphism
\[
\bxg{\rho}^{-1}: \bar{\YY}\longrightarrow \psl_2(\KK),
\]
expand it to
\[
\bxg{\sigma}: \YY \longrightarrow \psl_2(\KK)
\]
and then lift it to a map
\[
\bxg{\theta}: \YY \longrightarrow \sl_2(\KK).
\]

Let us call elements in $\bar{\YY}$ with already known preimages in $\psl_2(\KK)$ \emph{``white''}. Obviously, products of white elements are white.

We shall prove that every element in $\YY$ is white.

 \begin{lemma} All elements in
 $N_{\bar{\YY}}(\SS)$ and $N_{\bar{\YY}}(\RR)$ are white.
 \end{lemma}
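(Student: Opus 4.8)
The plan is to exploit the fact that, although we are in the process of constructing $\bxg{\rho}^{-1}$, the \emph{forward} map $\bxg{\rho}\colon\psl_2(\KK)\to\bar{\YY}$ is already computable; hence whenever we write down an element of $\psl_2(\KK)$ explicitly, its image in $\bar{\YY}$ automatically becomes white, since its preimage is known by fiat. Because the image of $\SS$ is a maximal torus of $\bar{\YY}\encr\psl_2$, its normaliser satisfies $|N_{\bar{\YY}}(\SS):\SS|=2$. It therefore suffices to exhibit a single white element $\bar{w}_{\SS}$ in the nontrivial coset: every element of $N_{\bar{\YY}}(\SS)$ is then either a white element of $\SS$ or a product $\bar{w}_{\SS}\bar{s}$ with $\bar{s}\in\SS$, and products of white elements are white. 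The same reduction applies verbatim to $\RR$.

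To manufacture a white coset representative I would transport the torus back to the matrix side. Reversibility of $\bxg{\rho}$ on $\SS$ yields a torus $T_{\SS}=\bxg{\rho}^{-1}(\SS)\leq\psl_2(\KK)$ together with an explicit generator $t$. Working entirely inside the explicit group $\psl_2(\KK)$, I would then compute an element $n_{\SS}$ inverting $t$, that is, solve the $\KK$-linear system $n_{\SS}t=t^{-1}n_{\SS}$ for an invertible $n_{\SS}$, using square-root extraction via \cite[Lemma 5.6]{BY2018} to normalise its determinant into $\psl_2$ when necessary. Since $\bxg{\rho}$ is an isomorphism it carries normalisers to normalisers, so $\bar{w}_{\SS}:=\bxg{\rho}(n_{\SS})$ lies in $N_{\bar{\YY}}(\SS)$; and as $n_{\SS}$ inverts a torus of order $>2$ (guaranteed by $|\bF|\geq 7$), we have $n_{\SS}\notin T_{\SS}$, whence $\bar{w}_{\SS}\notin\SS$. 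By construction $\bxg{\rho}^{-1}(\bar{w}_{\SS})=n_{\SS}$ is known, so $\bar{w}_{\SS}$ is white.

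This immediately delivers the effective reversal on the whole normaliser. Given an arbitrary $\bar{g}\in N_{\bar{\YY}}(\SS)$, I would test its conjugation action on the generator of $\SS$: if $\bar{g}$ centralises $\SS$ then $\bar{g}\in\SS$ and is reversed by the known inverse on $\SS$, while if $\bar{g}$ inverts $\SS$ then $\bar{w}_{\SS}^{-1}\bar{g}$ centralises $\SS$, hence lies in $\SS$, and we obtain $\bxg{\rho}^{-1}(\bar{g})=n_{\SS}\cdot\bxg{\rho}^{-1}(\bar{w}_{\SS}^{-1}\bar{g})$. Running the identical argument with $\RR$ and its element of order $4$ in place of $\SS$ shows that every element of $N_{\bar{\YY}}(\RR)$ is white as well.

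The step I expect to be the main obstacle is the non-split case of the Weyl-element computation. There $t$ has an irreducible characteristic polynomial over $\KK$, and I must ensure that $t$ and $t^{-1}$ are genuinely conjugate over $\KK$ itself, not merely over a quadratic extension, so that an inverting $n_{\SS}\in\gl_2(\KK)$ exists and can be scaled into $\psl_2(\KK)$, all in probabilistic time polynomial in $\log E$. This rests on the observation that $t$ and $t^{-1}$ share the same separable characteristic polynomial and are each regular, so they are rationally conjugate and the intertwiner is read off from the solution space of a single linear system; the order-$2$ structure of $N/T$ in $\psl_2$ then guarantees that the representative so obtained generates the full Weyl coset.
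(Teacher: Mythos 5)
Your proposal is correct and follows essentially the same route as the paper's proof: the paper likewise produces a single white element of $N_{\bar{\YY}}(\SS)\smallsetminus\SS$ by solving the linear system $MA=AM^{-1}$ over $\KK$ (for a matrix $A\in\sl_2(\KK)$ projecting to an involution, i.e.\ of trace zero), pushes it into $\bar{\YY}$ via $\bxg{\psi}$, and then writes every remaining normaliser element as this element times a white element of $\SS$, exactly your index-two coset argument. The only substantive refinement worth noting is that the paper's trace-zero normalisation quietly disposes of your non-square-determinant worry: the trace-zero solutions of the intertwining system form the scalar multiples of a genuine inverting involution of determinant $\bx{1}$, so their determinants are automatically squares and the square-root rescaling into $\sl_2(\KK)$ (hence into $\psl_2(\KK)$) always succeeds.
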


 \begin{proof} It suffices to prove the statement for involutions in $N_{\bar{\YY}}(\SS)\smallsetminus \SS$. We first construct one such involution. Since the elements of $\SS$ are white, we can represent any element from $\SS$ by $2 \times 2$ matrices with entries from $\KK$. Let $\bx{s} \in \SS$ be an element of order bigger than or equal to 3 and $M$ be its image in $\sl_2(\KK)$. Then we need to locate an involution $A\in \sl_2(\KK)$ satisfying $M^A=M^{-1}$ which is equivalent to $MA=AM^{-1}$. The entries of such a matrix $A$ can be found by solving a system of linear equations over the black box field $\KK$. Now, by using the map $\bxg{\psi}$ from Subsection \ref{sec:morp-sl2-Y}, we construct a white element $\bx{u}=\bxg{\psi}(A) \in N_{\bar{\YY}}(\SS)\smallsetminus \SS$. Now, any other involution $\bx{t} \in N_{\bar{\YY}}(\SS)$ can be written as $\bx{t}=\bx{u}\cdot \bx{u}\bx{t}$, with  $\bx{u}\bx{t} \in \SS$ being a white element.
 \end{proof}

 \begin{lemma}
 If $\bx{a}$ is a white involution then all elements in $C_{\bar{\YY}}(\bx{a})$ are white.
  \end{lemma}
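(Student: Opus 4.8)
The plan is to reverse $\bxg{\rho}$ on $C_{\bar{\YY}}(\bx{a})$ by combining the explicit structure of the centralizer with the reversibility of $\bxg{\rho}$ that is already available on the torus $\RR$. First recall the standard structure \cite{carter1972}: in $\bar{\YY}\encr\psl_2(\bF)$ the centralizer of an involution $\bx{a}$ is the normaliser $N_{\bar{\YY}}(\bar{T})$ of a maximal torus $\bar{T}$, a dihedral group whose unique cyclic subgroup of index $2$ is $\bar{T}$ and whose central involution is $\bx{a}$ itself. Since the preimages of $\bx{a}$ in $\YY\encr\sl_2(\bF)$ have order $4$, the torus $\bar{T}$ is of the same type as the image $\bar{\RR}$ of $\RR$, hence conjugate to it in $\bar{\YY}$. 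As $\bx{a}$ is white we know $\bxg{\alpha}:=\bxg{\rho}^{-1}(\bx{a})\in\psl_2(\KK)$, and inside the matrix group $\psl_2(\KK)$ we can write down $C_{\psl_2(\KK)}(\bxg{\alpha})$ explicitly, in particular its cyclic torus $\bar{T}_{\KK}$ and one reflection $\bxg{\tau}_0$; the restriction of $\bxg{\rho}$ carries this data isomorphically onto $C_{\bar{\YY}}(\bx{a})$.

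The only genuine difficulty is to reverse $\bxg{\rho}$ on the torus $\bxg{\rho}(\bar{T}_{\KK})=\bar{T}$ without computing a discrete logarithm in $\KK$. To do so I would transport $\bar{T}$ onto the already reversible torus $\bar{\RR}$. Working in $\psl_2(\KK)$, I compute a conjugator $\bxg{\eta}$ with $\bxg{\eta}^{-1}\bar{T}_{\KK}\,\bxg{\eta}=\bxg{\rho}^{-1}(\bar{\RR})$; this is routine linear algebra (diagonalisation in the split case, passage to a quadratic extension in the non-split case) and requires only square roots in $\KK$, available via the global exponent and the Tonelli--Shanks algorithm \cite[Lemma 5.6]{BY2018}. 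Pushing $\bxg{\eta}$ forward gives a white element $\bx{h}:=\bxg{\rho}(\bxg{\eta})$ with $\bx{h}^{-1}\bar{T}\bx{h}=\bar{\RR}$ and known preimage $\bxg{\eta}$. Then for any $\bx{g}\in\bar{T}$ I form $\bx{g}':=\bx{h}^{-1}\bx{g}\bx{h}\in\bar{\RR}$, reverse it using reversibility of $\bxg{\rho}$ on $\RR$, and return $\bxg{\eta}\,\bxg{\rho}^{-1}(\bx{g}')\,\bxg{\eta}^{-1}=\bxg{\rho}^{-1}(\bx{g})$. This transport is the heart of the argument: a direct reversal on $\bar{T}$ would cost a discrete logarithm, whereas conjugating onto $\bar{\RR}$ keeps everything polynomial in $\log E$.

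It remains to assemble the two cosets of $\bar{T}$ in $C_{\bar{\YY}}(\bx{a})$. Set the white reflection $\bx{r}_0:=\bxg{\rho}(\bxg{\tau}_0)$, with known preimage $\bxg{\tau}_0$, and fix a generator $\bx{t}_0$ of $\bar{T}$ of order at least $3$ (possible because $|\bF|\geqslant 7$). Given an arbitrary $\bx{g}\in C_{\bar{\YY}}(\bx{a})$, I test whether $\bx{g}$ commutes with $\bx{t}_0$: if it does, then $\bx{g}\in\bar{T}$ and the previous step applies; if it does not, then $\bx{g}$ is a reflection, so $\bx{g}\bx{r}_0\in\bar{T}$ is reversible and $\bxg{\rho}^{-1}(\bx{g})=\bxg{\rho}^{-1}(\bx{g}\bx{r}_0)\,\bxg{\tau}_0$. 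In either branch we output $\bxg{\rho}^{-1}(\bx{g})$ in time polynomial in $\log E$, so every element of $C_{\bar{\YY}}(\bx{a})$ is white.
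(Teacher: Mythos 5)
Your proof is correct and no step fails, but it is worth comparing it with the paper's two-line argument, since the two run on the same engine while decomposing the problem differently. The shared engine is this: a conjugator between white data can be computed explicitly inside $\psl_2(\KK)$, pushed forward through $\bxg{\rho}$ to a white element of $\bar{\YY}$, and conjugation by white elements preserves whiteness. The paper applies this at the level of \emph{involutions}: $\bx{a}$ and the white involution $\bx{s}$ sitting in one of the white tori are conjugate by a white element, so $C_{\bar{\YY}}(\bx{a})$ is a white conjugate of $C_{\bar{\YY}}(\bx{s})=N_{\bar{\YY}}(\SS)$, and the preceding lemma (whiteness of $N_{\bar{\YY}}(\SS)$ and $N_{\bar{\YY}}(\RR)$) finishes the proof. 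You apply it at the level of \emph{tori}: you conjugate $\bar{T}\leq C_{\bar{\YY}}(\bx{a})$ onto $\bar{\RR}$ and then redo the dihedral bookkeeping (torus plus reflection coset) by hand, bypassing the preceding lemma and using only the primitive reversibility of $\bxg{\rho}$ on $\RR$. Your route is self-contained and yields a fully explicit algorithm with transparent polynomial-time accounting; the paper's route is shorter and computationally lighter, since matching two involutions is plain linear algebra with no split/non-split dichotomy and no passage to a quadratic extension, and it matches the centralizers (hence their tori) automatically---exactly the work your diagonalisation step performs by hand. One small repair: you cannot in general certify a \emph{generator} of the hidden-order cyclic group $\bar{T}$, but you do not need one; any element $\bx{t}_0\in\bar{T}$ of order at least $3$ (easy to produce inside the explicit torus $\bar{T}_{\KK}$, since $|\bF|\geqslant 7$) is regular, so $C_{\bar{\YY}}(\bx{t}_0)=\bar{T}$ and your membership test works verbatim. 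Finally, your type-matching observation (preimages of $\bx{a}$ in $\sl_2$ have order $4$, hence $\bar{T}$ is conjugate to $\bar{\RR}$) is correct and plays exactly the role that the conjugacy of all involutions of $\psl_2(\bF)$ plays, implicitly, in the paper.
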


  \begin{proof}
  One of the white tori $\SS$ or $\RR$ contains an involution; without loss of generality we can assume that this is $\bx{s} \in \SS$. Being white involutions, $\bx{a}$ and $\bx{s}$ are conjugate by a white element (we can do the corresponding calculation in $\psl_2(\KK)$), hence  $C_{\bar{\YY}}(\bx{a})$ is conjugate to the white subgroup $C_{\bar{\YY}}(\bx{s}) = N_{\bar{\YY}}(\SS)$ by a white element and is therefore white.
  \end{proof}

We can now complete construction of $\bxg{\rho}^{-1}: \bar{\YY} \longrightarrow \psl_2(\KK)$.

\begin{lemma}
Every involution in $\bar{\YY}$ is white.
\end{lemma}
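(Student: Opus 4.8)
The plan is to reduce the statement to the preceding lemma, which asserts that the centralizer of a white involution is entirely white. Concretely, it suffices to produce, for an arbitrary involution $\bx{t}\in\bar{\YY}$, a \emph{white} involution $\bx{j}$ commuting with $\bx{t}$: then $\bx{t}\in C_{\bar{\YY}}(\bx{j})$, and the latter is white. A white involution is available to begin with: since $\RR$ contains an element of order $4$, its square is the central involution $\bx{z}\in Z(\YY)$, so the image of this element in $\bar{\YY}$ is an involution lying in $N_{\bar{\YY}}(\RR)$, which is white by the lemma on torus normalisers.

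Fix such a white involution $\bx{a}$ and set $\bx{g}=\bx{a}\bx{t}$. Because $\bx{a}$ and $\bx{t}$ are involutions, both invert $\bx{g}$: indeed $\bx{a}\bx{g}\bx{a}^{-1}=\bx{t}\bx{a}=\bx{g}^{-1}$, and likewise $\bx{t}\bx{g}\bx{t}^{-1}=\bx{g}^{-1}$. Suppose we have arranged $\bx{g}$ to have \emph{even} order $n$, and put $\bx{j}=\bx{g}^{n/2}$, the unique involution of $\langle\bx{g}\rangle$. Conjugating $\bx{j}$ by $\bx{a}$ yields $\bx{j}^{-1}$; but $\bx{j}$ is an involution, so in fact $\bx{a}$ \emph{centralises} $\bx{j}$, whence $\bx{j}\in C_{\bar{\YY}}(\bx{a})$ is white. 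The same computation with $\bx{t}$ in place of $\bx{a}$ shows that $\bx{t}$ centralises $\bx{j}$, i.e. $\bx{t}\in C_{\bar{\YY}}(\bx{j})$. Since $\bx{j}$ is now a \emph{white} involution, the preceding lemma makes $C_{\bar{\YY}}(\bx{j})$ white, and therefore $\bx{t}$ is white. The degenerate cases $\bx{g}=\bx{1}$ and $n=2$ simply mean $\bx{t}=\bx{a}$ or $\bx{t}\in C_{\bar{\YY}}(\bx{a})$, so that $\bx{t}$ is white outright.

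The one point requiring care — and the main obstacle — is guaranteeing a white involution $\bx{a}$ with $\bx{a}\bx{t}$ of even order; if $\bx{a}\bx{t}$ has odd order then $\langle\bx{a}\bx{t}\rangle$ contains no involution and the extraction of $\bx{j}$ fails. I would therefore proceed in Las Vegas fashion: produce white involutions $\bx{a}$ as conjugates $\bx{a}_0^{\bx{w}}$ of a fixed white involution $\bx{a}_0$ by random white elements $\bx{w}$ (such $\bx{a}$ are white, since products and conjugates of white elements are white), and test the order of $\bx{a}\bx{t}$ until it is even. What must be checked is that a constant proportion of involutions $\bx{a}$ yield an even-order product $\bx{a}\bx{t}$ — a routine count in $\psl_2(\bF)$, using that $\bx{t}$ inverts each torus it normalises and that a positive fraction of elements of an even-order torus have even order — together with the fact that random white conjugation distributes $\bx{a}$ sufficiently uniformly over the single class of involutions for the search to terminate in expected time polynomial in $\log E$.
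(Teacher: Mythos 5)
Your proof is correct and follows essentially the same route as the paper: find a white involution $\bx{a}$ with $\bx{a}\bx{t}$ of even order by randomized search, extract the involution $\bx{j}\in\langle\bx{a}\bx{t}\rangle$, observe it is white because it commutes with $\bx{a}$ and hence lies in $C_{\bar{\YY}}(\bx{a})$, and conclude that $\bx{t}\in C_{\bar{\YY}}(\bx{j})$ is white by the preceding centralizer lemma. If anything your write-up is more explicit than the paper's (which closes instead by generating a white dihedral subgroup containing $\bx{t}$), and, like the paper, it leaves the constant-proportion estimate for even-order products of involutions in $\psl_2(\bF)$ as a standard fact to be checked.
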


\begin{proof}
 Let $\bx{t}\in \bar{\YY}$ be an involution. Taking random white involutions (that is, images of random involutions from $\psl_2(\KK)$), we can find  a white involution $\bx{a}$ such that the product $\bx{a}\bx{t}$ is of even order, thus yielding an involution $\bx{z}$ commuting with both $\bx{a}$ and $\bx{t}$; this involution $\bx{z}$ is therefore white.
 This means that we can produce random white involutions in $C_{\bar{\YY}}(\bx{t})$  until they generate a white dihedral subgroup containing $\bx{t}$.
\end{proof}

\begin{lemma}
Every element of $\bar{\YY}$ is white.
\end{lemma}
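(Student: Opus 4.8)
The plan is to show that every $\bx{g}\in\bar{\YY}$ can be written as a product of elements already known to be white, so that whiteness of $\bx{g}$ follows from the fact that products (and hence, since $\bar{\YY}$ is finite, inverses and powers) of white elements are white. We may assume that $\bx{g}$ is neither the identity nor an involution, the latter being white by the preceding lemma. First I would dispose of the case where $\bx{g}$ has even order: if $\bx{g}$ has order $2k$, then $\bx{g}^k$ is an involution commuting with $\bx{g}$, hence a white involution, and by the centralizer lemma the whole of $C_{\bar{\YY}}(\bx{g}^k)\ni\bx{g}$ is white. Thus the genuine work concerns $\bx{g}$ of odd order --- the odd-order semisimple and the unipotent elements --- precisely those whose centralizers contain no involution, so that the centralizer lemma cannot be applied directly.

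For such $\bx{g}$ I would pick a random white involution $\bx{t}$ (an image of an involution of $\psl_2(\KK)$, whose preimage is therefore known) and form $\bx{c}=\bx{t}\,\bx{t}^{\bx{g}}=\bx{t}\,\bx{g}^{-1}\bx{t}\bx{g}$. If $\bx{t}^{\bx{g}}=\bx{t}$, then $\bx{g}\in C_{\bar{\YY}}(\bx{t})$ and we are done by the centralizer lemma. Otherwise $\bx{t}$ and $\bx{t}^{\bx{g}}$ are distinct involutions generating a dihedral group $D=\langle\bx{t},\bx{t}^{\bx{g}}\rangle$ with cyclic part $\langle\bx{c}\rangle$, and $\bx{g}$ conjugates $\bx{t}$ to $\bx{t}^{\bx{g}}$. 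When the order of $\bx{c}$ is odd, say $2m+1$, a short dihedral computation (using that $\bx{t}$ inverts $\bx{c}$) shows $\bx{t}^{\bx{c}^{-m}}=\bx{t}^{\bx{g}}$, whence $\bx{c}^{-m}\bx{g}^{-1}$ centralizes $\bx{t}$; writing $\bx{e}=\bx{c}^{-m}\bx{g}^{-1}\in C_{\bar{\YY}}(\bx{t})$ we obtain
\[
\bx{g}=\bx{e}^{-1}\bx{c}^{-m}.
\]
Here $\bx{e}^{-1}$ is white because it lies in $C_{\bar{\YY}}(\bx{t})$, which is white by the centralizer lemma (this is established by the centralizer machinery, independently of the expression of $\bx{e}$ through $\bx{g}$), while $\bx{c}^{-m}$ is white because $\bx{c}=\bx{t}\,\bx{t}^{\bx{g}}$ is a product of the two white involutions $\bx{t}$ and $\bx{t}^{\bx{g}}$. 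Hence $\bx{g}$ is white.

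The main point requiring care is efficiency, namely that this succeeds in probabilistic time polynomial in $\log E$. Orders and their parity are computed through the global exponent $E$, exactly as elsewhere in the paper. The delicate step is to argue that a random white involution $\bx{t}$ yields $\bx{c}=\bx{t}\,\bx{t}^{\bx{g}}$ of odd order with probability bounded below by an absolute positive constant, so that only expectedly constantly many choices of $\bx{t}$ are needed before the construction applies; when the order of $\bx{c}$ is even the two involutions $\bx{t},\bx{t}^{\bx{g}}$ may lie in different $D$-classes and the inversion argument fails, so we simply discard $\bx{t}$ and retry. This parity estimate for products of involutions in $\psl_2(\bF)$ is the analogue of the order-parity search already used in the preceding lemma, and it is here that the standing assumption $|\bF|\geqslant 7$ enters. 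Granting it, the procedure outputs the required white decomposition of $\bx{g}$, completing the proof that every element of $\bar{\YY}$ is white and thereby finishing the construction of $\bxg{\rho}^{-1}:\bar{\YY}\longrightarrow\psl_2(\KK)$.
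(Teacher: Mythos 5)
Your proof is correct in substance but takes a genuinely different route from the paper's. The paper disposes of this lemma in two sentences: it invokes \cite[Lemma 5.4]{BY2018} to write every element of $\bar{\YY}$ as a product of involutions (Las Vegas, polynomial time), and then applies the preceding lemma that every involution is white. You instead rebuild a decomposition from scratch: even-order elements are handled by powering up to an involution and applying the centralizer lemma, and odd-order elements by the Bray-style commutator trick, writing $\bx{g}=\bx{e}^{-1}\bx{c}^{-m}$ with $\bx{e}\in C_{\bar{\YY}}(\bx{t})$ white by the centralizer lemma and $\bx{c}=\bx{t}\,\bx{t}^{\bx{g}}$ white as a product of two involutions (note that $\bx{t}^{\bx{g}}$ is white because it is an involution, not because $\bx{g}$ is known to be white --- your argument correctly leans only on the preceding lemma here). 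Your dihedral identity checks out: $\bx{t}$ inverts $\bx{c}$, so $\bx{t}^{\bx{c}^{-m}}=\bx{c}^{-1}\bx{t}=\bx{t}^{\bx{g}}$ once $\bx{c}^{2m+1}=\bx{1}$. What the paper's route buys is brevity and the outsourcing of all probabilistic analysis to an already proven lemma; what yours buys is self-containedness within the whiteness machinery of this section.

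Two caveats. First, as written you need the exact order $2m+1$ of $\bx{c}$ to form $\bx{c}^{-m}$, but exact orders cannot in general be computed in time polynomial in $\log E$ (that would amount to factoring the odd part of $E$). This is easily patched: if $r$ is the odd part of $E$ and $\bx{c}^{r}=\bx{1}$, then $\bx{c}^{-m}$ is the unique square root of $\bx{c}$ in the odd-order cyclic group $\langle\bx{c}\rangle$, hence equals $\bx{c}^{(r+1)/2}$, which is computable; similarly, in your even-order case replace $\bx{g}^{k}$ by repeated squaring of $\bx{g}^{r}$ to reach an involution in $\langle\bx{g}\rangle$. Second, your lower bound on the probability that $\bx{c}$ has odd order is asserted, not proved; it is true for $\psl_2(\bF)$ with $|\bF|\geqslant 7$ (for instance, for unipotent $\bx{g}$ a direct trace computation shows the proportion of suitable involutions is an absolute constant), and the paper holds itself to a comparable standard in the preceding lemma, but this is the one place where your argument, unlike the paper's, cannot lean on a citation. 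Finally, a small factual slip in your motivation: odd-order elements are \emph{not} precisely those whose centralizers contain no involution --- an odd-order semisimple element has centralizer a torus, cyclic of order $(|\bF|\pm 1)/2$, which contains an involution whenever that number is even. This does not affect your proof, since you treat all odd-order elements uniformly by the dihedral trick.
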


\begin{proof}
Applying the same arguments in \cite[Lemma 5.4]{BY2018}, we have a Las Vegas polynomial time algorithm with which we can write every element of $\bar{\YY}$ as a product of  involutions. Since every involution is white, every element is white.
\end{proof}

We can now complete the proof of Theorem \ref{theo:proxy-sl2}. Indeed we have the inverse morphism $\bxg{\rho}^{-1}$ and hence we have a morphism
 \[
\bxg{\sigma}: \YY \longrightarrow \psl_2(\KK).
\]
Let $\bx{y}\in \YY$. We can compute $\bxg{\sigma}(\bx{y})$ as the coset in $\psl_2(\KK)$ consisting of two elements $\bx{u}$ and $\bx{v}$, and compute $\bxg{\rho}(\bx{u})$.
If $\bxg{\rho}(\bx{u}) = \bx{y}$, then $\bxg{\theta}(\bx{y}) = \bx{u}$, otherwise $\bxg{\theta}(\bx{y}) = \bx{v}$.

\section{Proof of Theorem \ref{corollary}}
\label{sec:Corollary-2}

In case of $\XX \encr \pgl_2(\bF)$, a proof of Theorem \ref{corollary} is a simple combination of Remark \ref{rem:so3} and Proposition \ref{pro:om3-psl2}.

In case of $\XX \encr \psl_2(\bF)$ the proof is a slight modification of arguments of Section \ref{sec:sl2-proxy}.

\section*{Acknowledgements}

This paper---and other papers in our project---would have never been written if the authors did not enjoy the warm hospitality offered to them at the Nesin Mathematics Village in \c{S}irince, Izmir Province, Turkey, in  2018--20 as part of their Research in Pairs programme; our thanks go to Ali Nesin and to all volunteers, staff, and students who have made the Village a mathematical paradise. 

Our work was partially supported by CoDiMa (CCP in the area of Computational Discrete Mathematics; EPSRC grant 	EP/M022641/1).

In the project, we were using the GAP software package by The GAP Group, GAP--Groups, Algorithms, and Programming, Version 4.8.7; 2017 (\url{http://www.gap-system.org}).


\end{document}